\newcommand{\cE}{\mathcal{E}}
\newcommand{\cK}{\mathcal{K}}
\newcommand{\cV}{\mathcal{V}}
\newcommand{\cX}{\mathcal{X}}
\newcommand{\bC}{\mathbb{C}}
\newcommand{\bE}{\mathbb{E}}
\newcommand{\bR}{\mathbb{R}}
\newcommand{\be}{\begin{equation}}
\newcommand{\ee}{\end{equation}}
\newcommand{\bal}{\begin{align}}
\newcommand{\eal}{\end{align}}
\newcommand{\ba}{\begin{align*}}
\newcommand{\ea}{\end{align*}}
\newcommand{\bmx}{\begin{matrix}}
\newcommand{\emx}{\end{matrix}}
\newcommand{\bbmx}{\begin{bmatrix}}
\newcommand{\ebmx}{\end{bmatrix}}
\newcommand{\bpmx}{\begin{pmatrix}}
\newcommand{\epmx}{\end{pmatrix}}
\newcommand{\bvmx}{\begin{vmatrix}}
\newcommand{\evmx}{\end{vmatrix}}
\newcommand{\ul}{\underline}
\newcommand{\wh}{\widehat}
\newcommand{\wt}{\widetilde}
\newcommand{\f}{\frac}
\newcommand{\inc}{\subseteq}
\newcommand{\Id}{\mathrm{Id}}
\newcommand{\tr}{\mathrm{tr}}
\newcommand{\argmin}{{\rm argmin}\,}
\newcommand{\minimize}[1]{\underset{#1}{\rm minimize}\,}
\newcommand{\la}{\lambda}
\newcommand{\La}{\Lambda}
\newcommand{\eps}{\varepsilon}
\begin{document}

\title*{S-Procedure Relaxation:\\ a Case of Exactness
Involving Chebyshev Centers}
\author{Simon Foucart and Chunyang Liao}
\institute{Simon Foucart \at Texas A\&M University,  College Station\\\email{foucart@tamu.edu}
\and Chunyang Liao \at University of California, Los Angeles\\ \email{liaochunyang@math.ucla.edu}}
%
%
\maketitle



\abstract{Optimal recovery is a mathematical framework for learning functions from observational data by adopting a worst-case perspective tied to model assumptions on the functions to be learned.
Working in a finite-dimensional Hilbert space,
we consider model assumptions based on approximability and observation inaccuracies
modeled as additive errors bounded in $\ell_2$. 
We focus on the local recovery problem, 
which amounts to the determination of Chebyshev centers. 
Earlier work by Beck and Eldar presented a semidefinite recipe for the determination of Chebyshev centers.
The result was valid in the complex setting only, 
but not necessarily in the real setting, 
since it relied on the S-procedure with two quadratic constraints, which offers a tight relaxation only in the complex setting. 
Our contribution consists in proving that this semidefinite recipe is exact in the real setting, too,
at least in the particular instance 
where the quadratic constraints involve orthogonal projectors. 
Our argument exploits a previous work of ours, where exact Chebyshev centers were obtained in a different way.
We conclude by stating some open questions and by commenting on other recent results in optimal recovery.}

\section{Rundown on Optimal Recovery}

The field of optimal recovery,
arguably shaped by the influence of Kolmogorov \cite{Kol}
and deeply rooted in Approximation Theory \cite{MicRiv},
is experiencing a revival thanks to newly available optimization tools.
Chosen as one of the core topics first investigated at CAMDA---the Center for Approximation and Mathematical Data Analytics---optimal recovery can be understood as a nonstatistical learning theory.
Indeed, a function $f$ acquired through point evaluations $y_i = f(x^{(i)})$, $i=1,\ldots,m$,
needs to be learned---or recovered, in the parlance preferred here.
But one does not abide by the postulate behind statistical learning theory,
stipulating that the $x^{(i)}$'s are independent realizations of a random variable. 
Thus, the performance of a learning/recovery procedure cannot be assessed in an average case
and one opts for an assessment focusing on the worst case,
 relative to a model expressing some prior scientific knowledge about $f$.

To be more precise,  
the task at hand consists in recovering an element $f$ from a Banach space $F$.
It is typically thought of as a space of functions,  although it does not have to be.
Rather than recovering $f$ in full,
it can be more relevant to recover a quantity of interest $Q(f)$,
where $Q: F \to Z$ represents a linear map in this article.
The element $f$ is only available through partial information,  specifically:
\begin{itemize}
\item some {\em a priori} information conveyed by a modeling assumption taking the form 
$$
	f \in \cK,
$$
where the so-called model set $\cK \inc F$ reflects an educated guess about realistic objects to be recovered;
\item some {\em a posteriori} information obtained through observational data of the form
$$
y_i = \la_i(f),
\qquad i=1,\ldots,m,
$$ 
for some linear functionals $\la_1,\ldots,\la_m: F \to \bR$.
More concisely, one writes $y = \La f \in \bR^m$,
where the so-called observation map $\La: F \to \bR^m$ is a linear map.
\end{itemize}
A recovery procedure is a process, perhaps partially cognizant of the model set $\cK$,
that takes in the observational data in $\bR^m$ and returns an estimation to $Q(f)$ in $Z$.
In other words, it is nothing but a map $\Delta: \bR^m \to Z$.
Its recovery performance can evidently be quantified
by $\|Q(f)-\Delta(y)\|_Z = \|Q(f)-\Delta(\La f)\|_Z$
for a fixed  $f \in \cK$ satisfying $\La f = y$.
However,
$f$ being unknown, one takes a worst-case perspective over all consistent $f$'s,
leading to a recovery performance quantified by
\begin{itemize}
\item the local worst-case error, at a fixed $y \in \bR^m$, defined as
\be
\label{lwce}
{\rm lwce}(\Delta,y) = \sup_{\substack{f \in \cK \\ \La f = y}} \|Q(f) - \Delta(y) \|_Z;
\ee
\item the global worst-case error defined as
\be
\label{gwce}
{\rm gwce}(\Delta) = \sup_{f \in \cK} \|Q(f) - \Delta(\La f) \|_Z.
\ee
\end{itemize}
A locally, respectively globally, optimal recovery map $\Delta^{\rm opt}: \bR^m \to Z$ is a map $\Delta: \bR^m \to Z$ that minimizes ${\rm lwce}(\Delta,y)$ at every $y \in \bR^m$, respectively  ${\rm gwce}(\Delta)$.
Noticing that ${\rm gwce}(\Delta) = \sup \{ {\rm lwce}(\Delta,y), y \in \La(\cK) \}$,
one realizes that a locally optimal recovery map is automatically globally optimal.
This somewhat makes the global setting `easier' than the local setting,
conceivably explaining the prevalence of the latter in the standard theory of optimal recovery.
There,  a coveted result often consists of the assertion that there exists an optimal recovery map which is linear---of course,  one strives to construct it!
In the global setting,
this existence result classically holds when the model set $\cK$ is symmetric and convex
and $Q$ is a linear functional
(see \cite[Theorem~4.7]{NovWoz} or \cite[Theorem~9.3]{BookDS})
and when the model set is a centered hyperellipsoid in a Hilbert space	
(see \cite[Theorem~4.11]{NovWoz} or \cite[Theorem~9.4]{BookDS}).
In the latter situation, the optimal recovery map, dubbed spline algorithm,  is also locally optimal.
There are other situations where global optimality via linear maps stands,
for instance,  in the space $C(\cX)$ of continuous functions on a compact space $\cX$ 
relative to the model set
\be
\label{ApproxSet}
\cK_\cV = \{ f \in C(\cX): {\rm dist}_{C(\cX)}(f,\cV) \le \eps \} 
\ee
subordinate to a linear subspace $\cV$ of $C(\cX)$ and an approximability parameter~$\eps \ge 0$.
More details will be given in Section \ref{SecRes}.

\section{Our Contribution: Local Optimality from Inaccurate Data}
\label{SecContri}

From now on, we leave the global setting behind and tackle the harder local setting,
starting by highlighting its geometric interpretation.
Namely, considering a locally optimal recovery map $\Delta^{\rm opt}: \bR^m \to Z$
and a fixed $y \in \bR^m$,
since $\Delta^{\rm opt}(y) \in Z$ minimizes $\sup\{ \|Q(f) - z\|_Z: f \in \cK, \La f = y\}$,
one can write, almost tautologically,
that
$$
\Delta^{\rm opt}(y) \in \underset{{z \in Z, r \ge 0}}{\argmin \;} \;\; r
\qquad \mbox{s.to } \quad \|Q(f)-z\|_Z \le r \mbox{ whenever } f \in \cK \mbox{ and } \La f = y.
$$
This shows that $\Delta^{\rm opt}(y)$ is a center of a smallest-radius ball containing $Q(\cK_y)$,
where $\cK_y := \{ f \in F: f \in \cK \mbox{ and } \La f = y \}$.
It is said that $\Delta^{\rm opt}(y)$ is a Chebyshev center for the set $Q(\cK_y)$,
often eluding to mention the norm on $Z$.

But the above context is not quite where our the current investigations take place.
Indeed, the discussion so far assumed that the observational data were accurate.
In realistic situations,  they are contaminated by additive noise, so that $y_i = \la_i(f) + e_i$, $i=1,\ldots,m$.
In short, we write $y = \La f + e$
for some error vector $e \in \bR^m$.
We shall model this vector deterministically through $e \in \cE$ for a so-called uncertainty set $\cE \inc \bR^m$.
Thus, the {\em a~posteriori} information now takes the form
$$
y - \La f \in \cE.
$$ 
This leads to an updated notion of local worst-case error,  as defined by 
$$
{\rm lwce}(\Delta,y) = \sup_{\substack{f \in \cK \\ y - \La f \in \cE}} \|Q(f) - \Delta(y)\|_Z.
$$
Our objective of finding an optimal recovery map $\Delta^{\rm opt}: \bR^m \to Z$ now becomes the determination,
for each $y \in \bR^m$,
of a solution $\Delta^{\rm opt}(y)$ to the optimization program
$$
\minimize{z \in Z} \;  \sup_{\substack{f \in \cK \\ y - \La f \in \cE}} \|Q(f) -z \|_Z.
$$ 
As before, 
one interprets geometrically $\Delta^{\rm opt}(y)$ as a Chebyshev center for the set $Q(\cK_{y,\cE})$,
where $\cK_{y,\cE} = \{ f \in F: f \in \cK \mbox{ and } y - \La f \in \cE \}$,
i.e., as a center for a smallest-radius ball containing $Q(\cK_{y,\cE})$.

To achieve our objective---at least partially---we place ourselves in a Hilbert framework,
i.e., we assume from now on that $F$ is a finite-dimensional Hilbert space,
hence it is denoted by $H$ instead of a generic $F$.
Let us state our first contribution,
before placing it in the context of the current knowledge.
Note that we can safely talk about {\em the} Chebyshev center in this statement,
as it was known as early as \cite{Gar} that a bounded set in a uniformly convex Banach space possesses a unique Chebyshev center.

\begin{theorem}
\label{ThmMain}
In a finite-dimensional Hilbert space $H$,
consider a model set $\cK$ and an uncertainty set $\cE$ given by
$$
\cK  = \{ f \in H: \|Pf\|_H \le \eps\}
\qquad \mbox{and} \qquad
\cE = \{ e \in \bR^m: \|e\|_2 \le \eps \}.
$$
If $P$ is an orthogonal projection, if $\La \La^* = \Id_{m}$,
and if $\ker(P) \cap \ker(\La) = \{0\}$,
then, for any $y \in \bR^m$,
the Chebyshev radius of $\cK_{y,\cE} = \{ f \in H: f \in \cK \mbox{ and } y - \La f \in \cE\}$	
is equal to the optimal value of the semidefinite program
\begin{align}
\label{SDPCheCen}
\minimize{\substack{c,d \ge 0 \\ t \in \bR}}  \;
c \eps^2 + d \eta^2 - d \|y\|_2^2 + t
&  & \mbox{s.to }  & \quad
c P + d \La^* \La  \succeq \Id_H,\\
\nonumber
&  & \mbox{and } & \quad
\bbmx
c P + d \La^* \La & \vline &  d \La^* y\\
\hline
 d y^* \La  & \vline & t
\ebmx
\succeq 0.
\end{align}
Moreover, the Chebyshev center is the solution to a regularization program with specified parameters,
namely it is given by
\be
\label{RegSpe}
f_{\wt{c},\wt{d}} := \underset{f \in H}{\argmin} \;  \wt{c} \, \|P f \|_H^2 + \wt{d} \, \|y - \La f\|_2^2,
\ee
where $\wt{c},\wt{d} \ge 0$ solve the semidefinite program \eqref{SDPCheCen}.
\end{theorem}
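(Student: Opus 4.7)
The plan is to sandwich the SDP value $V$ against the squared Chebyshev radius, invoking our earlier characterization of the Chebyshev center for the nontrivial direction. Since the first constraint forces $cP + d\La^*\La \succ 0$, a Schur complement on the block-matrix constraint eliminates $t$ and reduces the SDP to
\[
V \;=\; \min_{c, d \ge 0,\; cP + d\La^*\La \succeq \Id_H}\; \psi(c, d), \qquad \psi(c, d) := c\eps^2 + d\eps^2 - d\|y\|_2^2 + d^2 y^*\La(cP + d\La^*\La)^{-1}\La^* y.
\]
The normal equation $(cP + d\La^*\La)\,f_{c, d} = d\La^*y$ identifies $f_{c, d} = d(cP + d\La^*\La)^{-1}\La^*y$ as the minimizer of the regularization functional in \eqref{RegSpe}, and a short computation using $P^2 = P$ rewrites $\psi(c, d) = c(\eps^2 - \|Pf_{c, d}\|_H^2) + d(\eps^2 - \|y - \La f_{c, d}\|_2^2)$.

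The technical crux is the pointwise estimate $\|f - f_{c, d}\|_H^2 \le \psi(c, d)$ for every $f \in \cK_{y, \cE}$ and every SDP-feasible $(c, d)$. I would derive it by expanding the regularization functional around $f_{c, d}$ and using the normal equation to kill the cross term, yielding the identity
\[
c\|Pf\|_H^2 + d\|y - \La f\|_2^2 - c\|Pf_{c, d}\|_H^2 - d\|y - \La f_{c, d}\|_2^2 \;=\; (f - f_{c, d})^*(cP + d\La^*\La)(f - f_{c, d}),
\]
and then combining with $\|Pf\|_H, \|y - \La f\|_2 \le \eps$ together with $cP + d\La^*\La \succeq \Id_H$. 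The estimate exhibits $f_{c, d}$ as a center of $\cK_{y, \cE}$ with radius $\sqrt{\psi(c, d)}$, which after minimizing over $(c,d)$ yields $(\text{Chebyshev radius})^2 \le V$.

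For the reverse inequality I would invoke our earlier work, which, via a route independent of the S-procedure, identifies the Chebyshev center of $\cK_{y, \cE}$ as $f_{c^*, d^*}$ for an explicit SDP-feasible pair $(c^*, d^*)$ and pins the squared Chebyshev radius at $\psi(c^*, d^*)$. Setting $t^*$ equal to the corresponding Schur-complement value produces an SDP-feasible triple whose objective equals $(\text{Chebyshev radius})^2$, forcing $V \le (\text{Chebyshev radius})^2$ and hence equality. The identification of the Chebyshev center as $f_{\wt c, \wt d}$ for any SDP optimizer $(\wt c, \wt d, \wt t)$ then falls out of the pointwise estimate applied at $(\wt c, \wt d)$: it exhibits $f_{\wt c, \wt d}$ as a center of radius at most $\sqrt{V}$, necessarily equal to the Chebyshev radius, after which uniqueness (Garkavi) closes the argument.

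The main obstacle is this appeal to the earlier paper: it is precisely there that the hypotheses that $P$ is an orthogonal projection, $\La\La^* = \Id_m$, and $\ker(P) \cap \ker(\La) = \{0\}$ are genuinely needed to produce a pair $(c^*, d^*)$ that is simultaneously SDP-feasible and matches $\psi(c^*, d^*)$ to the squared Chebyshev radius. Without this structural information the S-procedure with two quadratic constraints is known to be loose over $\bR$, as the introduction stresses, so the relaxation could not be shown tight by purely algebraic manipulation of the SDP itself.
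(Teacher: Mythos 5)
Your proposal is correct, and its first half --- the pointwise estimate $\|f - f_{c,d}\|_H^2 \le \psi(c,d)$ obtained by expanding the regularization functional around $f_{c,d}$ and killing the cross term via the normal equation --- is exactly the content of the paper's S-procedure relaxation (Theorem~\ref{ThmQ}, steps (i)--(iii)) specialized to $Q = \Id_H$, with the Schur-complement elimination of $t$ and the identity $\psi(c,d) = c(\eps^2 - \|Pf_{c,d}\|^2) + d(\eta^2 - \|y-\La f_{c,d}\|^2)$ appearing there almost verbatim (note in passing that the radius of $\cE$ should be $\eta$, not $\eps$, to match the SDP objective). Where you genuinely diverge is the reverse inequality: the paper passes to the SDP dual, constructs an explicit dual certificate $\wh{X}$ from the center $f_\sharp$ and the extremal perturbation $h_\sharp$ of \cite{FouLia}, and verifies feasibility plus complementary slackness to reach ${\sf lub} = \|h_\sharp\|^2 \le {\sf sv}(f_\sharp)$, whereas you stay entirely on the primal side --- the pair $(c_\sharp,d_\sharp)$ from \cite{FouLia} is feasible by its condition (a), and its objective value is at most the squared Chebyshev radius, so $V \le r^2$ follows at once. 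This is a legitimate and in fact leaner route; the duality machinery buys the extra information that the gap is zero and that $(c_\sharp,d_\sharp)$ is an exact minimizer, but it is not needed for the theorem. The one step you leave implicit, and should spell out, is the evaluation $\psi(c_\sharp,d_\sharp) = \|h_\sharp\|^2$: this does not follow from the bare statement that $f_\sharp$ is the center, but from the full optimality system of \cite{FouLia}, namely the orthogonality relations $\langle Pf_\sharp, h_\sharp\rangle = 0 = \langle \La^*(\La f_\sharp - y), h_\sharp\rangle$, the normalizations $\|Pf_\sharp + Ph_\sharp\|^2 = \eps^2$ and $\|\La f_\sharp - y + \La h_\sharp\|^2 = \eta^2$, and the eigenvector identity $(c_\sharp P + d_\sharp \La^*\La)h_\sharp = h_\sharp$, which together give $\psi(c_\sharp,d_\sharp) = c_\sharp\|Ph_\sharp\|^2 + d_\sharp\|\La h_\sharp\|^2 = \|h_\sharp\|^2 \le r^2$ since $f_\sharp + h_\sharp \in \cK_{y,\cE}$. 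With that computation inserted, your argument closes exactly as the paper's does, including the final appeal to uniqueness of the Chebyshev center.
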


For the sake of the following discussion, we select the model and uncertainty sets as arbitrary hyperellipsoids.
Specifically, with $R,S$ representing linear maps from $H$ into other Hilbert spaces 
(all norms now being written as $\|\cdot\|$ for ease of notation)
and with $\eps,\eta$ representing positive parameters,
we consider
\begin{align}
\label{ModelSet}
\cK & = \{ f \in H: \|R f\| \le \eps\} \inc H,\\
\label{UncerSet}
\cE & = \{ e \in \bR^m: \|S e \| \le \eta\} \inc \bR^m.
\end{align} 
In this case,
Micchelli and Melkman \cite{MelMic,Mic} already observed that some regularization (with unspecified parameters) provides linear recovery maps that are optimal,
albeit globally.
Locally, a similar conclusion was derived by Beck and Eldar \cite{BecEld} for the full estimation problem, i.e., when $Q = \Id_H$.
In other words, they---almost---established Theorem~\ref{ThmMain} in the more general situation of the model and uncertainty sets  \eqref{ModelSet}-\eqref{UncerSet}.
There is a subtlety, and an important one: technically, 
the Chebyshev center was determined in the complex setting only,
but not necessarily in the real setting.
This incongruity occurs because the main tool used in the argument,
i.e., the S-procedure with two constraints, is only exact in the complex setting,  see next section for details.
In the real setting,
the S-procedure is only a relaxation,
merely leading to an overestimation of the Chebyshev radius 
rather than to the genuine Chebyshev radius.
Our main contribution in this article therefore consists in showing that the supposed overestimation
actually agrees with the Chebyshev radius (i.e., the minimal local worst-case error),
at least in a particular instance.

The basis of the argument is the fact that the exact Chebyshev radius and center have already been obtained,
in a different form,
in case $P$ is an orthogonal projection and $\La \La^* = \Id_{m}$.
Indeed, as an extension to the local optimal recovery problem in Hilbert spaces
with approximability model \eqref{ApproxSet} and accurate data 
(i.e., with $P = P_{\cV^\perp}$ and $\eta=0$),
which was settled in \cite{BCDDPW},  
we established in \cite[Theorem 8]{FouLia} that the Chebyshev center arises from
 a regularization program with explicitly described parameter $\tau_\sharp \in (0,1)$.
Precisely,
for $\tau \in (0,1)$,
let us define
\be
\label{RegSpe2}
f_\tau := \underset{f \in H}{\argmin} \; (1-\tau) \|P f \|^2 + \tau \|y - \La f\|^2
\ee
as the solution to a regularization program akin to \eqref{RegSpe}.
The Chebyshev center is $f_{\tau_\sharp}$,
where the desired parameter $\tau_\sharp$
is the unique $\tau$ between $1/2$ and $\eps / (\eps + \eta)$ satisfying the implicit equation
\be
\label{ImpEq}
\la_{\min} ((1-\tau)P + \tau \La^* \La) = \f{(1-\tau)^2 \eps^2 - \tau^2 \eta^2}{(1-\tau) \eps^2 - \tau \eta^2 + (1-\tau)\tau(1-2\tau) \delta^2},
\ee
in which $\delta$ is precomputed as $\delta = \min \{ \|Pf \|: \La f = y\} = \min \{ \|\La f - y \|: Pf = 0  \}$.
As explained in \cite[Appendix]{FouLia},
the above equation can be solved efficiently via the Newton method.
According to the yet-to-be-established Theorem \ref{ThmMain},
the Chebyshev center can alternatively be determined by solving the semidefinite program \eqref{SDPCheCen}.
We have not seriously compared these two options,
but we would instinctively favor solving \eqref{ImpEq} to bypass the black-box nature of semidefinite solvers.

\section{Overestimate of the Chebyshev Radius via the S-Procedure}

Our goal in this section is to extend a result of \cite{BecEld} to a quantity of interest $Q$ which is an arbitrary linear map between two Hilbert spaces,
instead of just $Q=\Id_H$. 
This extension is not really difficult,
but our arguments differ slightly from the ones of~\cite{BecEld}.
The result itself,
which provides an upper bound for the Chebyshev radius of $Q(\cK_{y,\cE})$,
as well a candidate for its Chebyshev center,
is based on the S-prodecure relaxation.
When this relaxation is exact,
the upper bound agrees with the Chebyshev radius and the candidate is the genuine Chebyshev center.
It is in the next section that we  establish the exactness of the S-procedure relaxation in our particular instance.
Here, we simply explain where the upper bound is coming from.

To this end, we start by recalling the gist of the S-procedure
and point to the survey \cite{PolTer} for more details.
Given $K+1$ quadratic functions defined on $H$, say
$$
q_k(h) = \langle A_k h, h \rangle + 2 \langle a_k, h \rangle + \alpha_k,
\qquad k=0,1,\ldots,K,
$$
where the $A_k$'s are self-adjoint operators, 
the $a_k$'s are vectors, and the $\alpha_k$'s are scalars,
we consider the two assertions
\begin{align}
\label{SProdAss1}
q_0(h) \le 0 \; \;
& \hspace{-0.5mm}  \mbox{whenever} \; \; 
q_1(h) \le 0, \ldots, q_K(h) \le 0,\\
\label{SProdAss2}
\mbox{there exist }c_1,\ldots,c_K \ge 0 \colon  
& \hspace{-0.5mm} q_0(h) \le c_1 q_1(h) + \cdots + c_K q_K(h) \,  \mbox{ for all }h \in H.
\end{align}
Obviously, if assertion \eqref{SProdAss2} holds, then assertion \eqref{SProdAss1} holds as well.
This, in essence, is the S-procedure.
The question of its exactness is whether \eqref{SProdAss1} and \eqref{SProdAss2} are in reality equivalent.
It is the case for $K=1$: this is Yakubovich S-lemma \cite{Yak}.
We are actually interested in $K=2$ here.
In this situation,  it was shown in \cite{BecEld2} that the S-procedure is exact when the scalar field is $\bC$,
but not necessarily when the scalar field is $\bR$,
which is our primary concern.
Still, under mild assumptions,
exactness holds for $\bR$ and $K=2$ in the absence of linear terms, i.e.,  when $a_0=a_1=a_2=0$.
The latter result, established by Polyak in \cite{Pol}, was the key for us
to settle the global optimality problem in \cite{FouLia}.
But in general, we make do with the mere relaxation: this leads to the overestimation derived below
(which turns into an exact evaluation if the scalar field is $\bC$).

\begin{theorem}
\label{ThmQ}
Let $Q:H \to Z$ be a linear map between finite-dimensional Hilbert spaces.
Given Hilbert-space-valued linear maps $R$ and $S$ defined on $H$ and $\bR^m$, respectively,
and satisfying $\ker(R) \cap \ker(S \La) = \{0\}$,
consider the model set \eqref{ModelSet} and uncertainty set \eqref{UncerSet}, i.e.,
$$
\cK  = \{ f \in H: \|R f\| \le \eps\} \inc H
\qquad \mbox{and} \qquad
\cE = \{ e \in \bR^m: \|S e \| \le \eta\} \inc \bR^m.
$$
For $y \in \bR^m$,
consider $\wt{\gamma}$ and $\wt{c},\wt{d},\wt{t}$ to be the minimal value and minimizers of the semidefinite program
\begin{align}
\label{SDPCheCenQ}
\minimize{\substack{c,d \ge 0 \\ t \in \bR}}  \;
c \eps^2 + d \eta^2 - d \| Sy\|_2^2 + t
&  & \mbox{s.to }  & \quad
c R^* R + d \La^* S^* S \La  \succeq Q^* Q,\\
\nonumber
&  & \mbox{and } & \quad
\bbmx
c R^* R + d \La^* S^* S \La & \vline &  d \La^* S^* S y\\
\hline
 d y^* S^* S \La  & \vline & t
\ebmx
\succeq 0.
\end{align}
Then the Chebyshev radius of $Q(\cK_{y,\cE})$,
$\cK_{y,\cE} := \{ f \in H: f \in \cK \mbox{ and } y - \La f \in \cE \}$, 
is upper bounded as
\be
\label{OverEst}
\inf_{z \in Z} \;  \sup_{\substack{f \in \cK \\ y - \La f \in \cE}} \|Q(f) -z \|
\; \le \; \sup_{\substack{f \in \cK \\ y - \La f \in \cE}} \|Q(f) -Q(f_{\wt{c},\wt{d}}) \|
\; = \; \wt{\gamma}^{1/2} ,
\ee
where $f_{\wt{c},\wt{d}}$ is the solution to a regularization program with parameters $\wt{c},\wt{d}$,
namely 
\be
\label{RegSpeQ}
f_{\wt{c},\wt{d}} := \underset{f \in H}{\argmin} \;  \wt{c} \, \|R f \|^2 + \wt{d} \, \|S(y - \La f)\|^2.
\ee
\end{theorem}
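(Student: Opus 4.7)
My plan is to take the optimal triple $(\wt c,\wt d,\wt t)$ from \eqref{SDPCheCenQ} as the source of both a candidate center $Q(f_{\wt c,\wt d})$ and a pair of Lagrange-style multipliers, and then to run the S-procedure in one direction only, exploiting that the existence of $c,d\ge 0$ certifying a pointwise quadratic inequality suffices to upper-bound the supremum. The first inequality in \eqref{OverEst} is immediate, being just the defining property of the Chebyshev radius; the substantive claim is the equality $\sup_{f\in \cK_{y,\cE}}\|Q(f)-Q(f_{\wt c,\wt d})\|=\wt\gamma^{1/2}$.

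Set $g:=f_{\wt c,\wt d}$, $N:=\wt c\,R^*R+\wt d\,\La^*S^*S\La$, and $M:=N-Q^*Q$. The hypothesis $\ker R\cap\ker(S\La)=\{0\}$ makes $N$ invertible whenever $\wt c,\wt d>0$ (boundary cases being handled by a pseudoinverse substitution), and the optimality of $g$ in \eqref{RegSpeQ} is equivalent to the normal equation $Ng=\wt d\,\La^*S^*Sy$. The key algebraic step is to expand the quadratic
\[
\|Q(f-g)\|^{2}-\wt c\bigl(\|Rf\|^{2}-\eps^{2}\bigr)-\wt d\bigl(\|S(y-\La f)\|^{2}-\eta^{2}\bigr)
\]
as a polynomial in $f$ and to use the normal equation to cancel the terms linear in $f$; this produces
\[
-\langle M(f-g),f-g\rangle+\wt c\,\eps^{2}+\wt d\,\eta^{2}-\wt d\,\|Sy\|^{2}+\langle Ng,g\rangle.
\]
The first LMI of \eqref{SDPCheCenQ} is precisely $M\succeq 0$, which makes $-\langle M(f-g),f-g\rangle\le 0$; a Schur-complement computation on the second LMI yields $\wt t\ge \wt d^{2}\langle N^{-1}\La^*S^*Sy,\La^*S^*Sy\rangle=\langle Ng,g\rangle$. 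Combining these and using $\wt c,\wt d\ge 0$, the pointwise bound
\[
\|Q(f-g)\|^{2}\le \wt\gamma+\wt c\bigl(\|Rf\|^{2}-\eps^{2}\bigr)+\wt d\bigl(\|S(y-\La f)\|^{2}-\eta^{2}\bigr)\qquad(f\in H)
\]
follows, and restriction to $\cK_{y,\cE}$ delivers $\sup_{f\in \cK_{y,\cE}}\|Q(f-g)\|^{2}\le\wt\gamma$.

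For the reverse inequality I would appeal to SDP duality: the program \eqref{SDPCheCenQ} is, after elimination of a center variable $z$ via the choice $z=Qg$, the Lagrangian dual of $\sup_{f\in\cK_{y,\cE}}\|Q(f-g)\|^{2}$, so that at the SDP optimum the Schur-complement tightness $\wt t=\langle Ng,g\rangle$, combined with the stationarity identity $M(f^\star-g)=0$ and complementary slackness, identifies a primal maximizer $f^\star$ saturating the pointwise bound. The main anticipated obstacle is a clean verification that such an $f^\star$ actually lies in $\cK_{y,\cE}$, together with the handling of the corner cases in which one of $\wt c,\wt d$ vanishes (so that only one quadratic constraint is active), where the same identities persist with $N^{-1}$ replaced by a suitable pseudoinverse and $g$ by the corresponding regularized minimizer. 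Taking square roots then delivers the equality in \eqref{OverEst}.
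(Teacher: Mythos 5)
Your first half is correct and is, in substance, the paper's own argument in condensed form: you verify directly that the multipliers $\wt{c},\wt{d}$ extracted from \eqref{SDPCheCenQ} certify the pointwise quadratic inequality
\[
\|Q(f)-Q(f_{\wt{c},\wt{d}})\|^2 \;\le\; \wt{\gamma} + \wt{c}\,(\|Rf\|^2-\eps^2) + \wt{d}\,(\|S(y-\La f)\|^2-\eta^2)
\qquad (f \in H),
\]
using the normal equation to kill the linear terms, the first LMI to control the quadratic part, and the Schur complement of the second LMI to bound the constant $\langle N f_{\wt{c},\wt{d}}, f_{\wt{c},\wt{d}}\rangle$ by $\wt{t}$. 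The paper reaches the same conclusion by routing through the auxiliary programs ${\sf ub}(z)$ and ${\sf lub}$ and checking that $z=Q(f_{\wt{c},\wt{d}})$ attains ${\sf lub}$; your direct expansion buys a shorter derivation of the bound $\sup_{f\in\cK_{y,\cE}}\|Q(f)-Q(f_{\wt{c},\wt{d}})\|^2\le\wt{\gamma}$, at the cost of not isolating the reusable quantities ${\sf sv},{\sf ub},{\sf lub}$ that the paper needs again in its Section 4.

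The second half is where the proposal breaks down. You are trying to produce a maximizer $f^\star\in\cK_{y,\cE}$ saturating the pointwise bound, so as to prove $\sup_{f\in\cK_{y,\cE}}\|Q(f)-Q(f_{\wt{c},\wt{d}})\|^2=\wt{\gamma}$. This is exactly the assertion that the S-procedure with two quadratic constraints is tight over $\bR$, which is false in general (it holds over $\bC$, see Beck--Eldar); establishing it in a special case is the entire content of the paper's Theorem~\ref{ThmMain}, not a corollary of generic SDP duality. Concretely, strong duality and complementary slackness hand you a dual optimal matrix $X\succeq 0$, but to extract a single feasible $f^\star$ from it you need the relevant block of $X$ to have rank one; over $\bR$ with two constraints this can fail, and then no feasible $f$ attains $\wt{\gamma}$ --- the obstacle you flag as "anticipated" is not a technicality but the crux. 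Note also that the paper's own proof of this theorem never establishes the equality either: its concluding chain reads $\inf_z{\sf sv}(z)\le{\sf sv}(Q(f_{\wt{c},\wt{d}}))\le{\sf ub}(Q(f_{\wt{c},\wt{d}}))={\sf lub}=\wt{\gamma}$, i.e., only the inequality, and exactness is recovered later only for $Q=\Id_H$, $P$ an orthogonal projector, $\La\La^*=\Id_m$, by constructing an explicit dual feasible $\wh{X}$ out of the Chebyshev center already known from \cite{FouLia} --- external input your argument does not have. You should therefore stop after the upper bound and not attempt the reverse inequality by duality alone.
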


\begin{proof}
We drop the dependence on $y$ throughout the argument below.
For $c,d \ge 0$,  we also use the notation 
$$
f_{c,d} := \underset{f \in H}{\argmin} \;  c \, \|R f \|^2 + d \, \|S(y - \La f)\|^2.
$$
It is essential to keep in mind that $f_{c,d}$ is characterized by
\be
\label{CharaReg}
c R^* R f_{c,d} + d \La^* S^* S (\La f_{c,d} - y) = 0.
\ee
As $c R^* R + d \La^* S^* S \La$ is invertible thanks to the assumption $\ker(R) \cap \ker(S \La) = \{0\}$,
this characterization can be rewritten as 
\be
\label{ExprReg}
f_{c,d} = [c R^* R + d \La^* S^* S \La]^{-1}(d \La^* S^* S y).
\ee
Moreover, working preferentially with squared norms, 
we introduce the quantities 
(${\sf sv}$ is for squared value,
${\sf ub}$ is for upper bound,
and ${\sf lub}$ is for least upper bound):
\begin{align*}
{\sf sv}(z) & =  \sup_{f \in H} \; \|Q(f) -z \|^2
& & \mbox{s.to } \quad f \in \cK \mbox{ and  } y - \La f \in \cE,\\
{\sf ub}(z) & =  \inf_{\gamma, c, d\ge 0}  \;  \gamma
& &  \mbox{s.to} \quad \mbox{the constraint elucidated in \eqref{CstUB} below},\\
{\sf lub} & =
\inf_{c,d \ge 0} \;  {\sf Obj}
&  & \mbox{s.to} \quad
\mbox{the constraint elucidated in \eqref{CstLUB} below},
\end{align*}
where the objective function is ${\sf Obj} = c( \eps^2 -  \|Rf_{c,d}\|^2  ) + d ( \eta^2 - \|S(y - \La f_{c,d})\|^2  )$.
As for the constraints, they read
\begin{align}
\nonumber
 \gamma  - & c( \eps^2 -  \|Rf_{c,d}\|^2  ) - d ( \eta^2 - \|S(y - \La f_{c,d})\|^2  )
 + c \|Rh \|^2 + d \|S \La h \|^2 - \|Qh\|^2\\
 \label{CstUB}
 & \ge  \|Q(f_{c,d}) -z \|^2 + 2 \langle Q(f_{c,d}) -z, Qh \rangle
 \qquad \quad \; \; \,  \mbox{for all }  h \in H,
\end{align}
and
\be
\label{CstLUB}
c \|Rh \|^2 + d \|S \La h \|^2 - \|Qh\|^2 \ge 0 
\qquad \mbox{for all } h \in H.
\ee
We now divide the argument into the proofs of several facts, namely:
\begin{enumerate}
\item[(i)] for all $z \in Z$, ${\sf sv}(z) \le {\sf ub}(z)$;
\item[(ii)] $\inf_{z \in Z} {\sf ub}(z) = {\sf lub} = {\sf ub}(Q(f_{\wt{c},\wt{d}}) )$;
\item[(iii)] the program defining ${\sf lub}$ is equivalent to the program \eqref{SDPCheCenQ}.
\end{enumerate}
Once all these facts are justified, we are able to conclude via
$$
\inf_{z \in Z} {\sf sv}(z)
\le  {\sf sv}(Q(f_{\wt{c},\wt{d}}))
\underset{\rm (i)}{\le}{\sf ub}(Q(f_{\wt{c},\wt{d}}))
\underset{\rm (ii)} = {\sf lub}
\underset{\rm (iii)} = \wt{\gamma},
$$
which,
up to taking square roots,
is the result announced in \eqref{OverEst}.

\vspace{5mm}
\noindent
\ul{Justification of (i).}
For $z \in Z$,
the definition of ${\sf sv}(z)$ specified to our situation yields
\begin{align*}
 {\sf sv}(z)
 & = \sup_{f \in H} \{  \|Q(f) -z \|^2 : \|Rf\|^2 \le \eps^2, \|S(y - \La f)\|^2 \le \eta^2  \}\\
 & = \inf_{\gamma \ge 0} \{ \gamma :
 \|Q(f) -z \|^2  \le \gamma  \mbox{ whenever }  \|Rf\|^2 \le \eps^2 \mbox{ and }\|S(y - \La f)\|^2 \le \eta^2 \}.
\end{align*}
In the spirit of the S-procedure, the latter constraint is satisfied if, for some $c,d \ge 0$, 
\be
\label{CstSProc}
 \|Q(f) -z \|^2  - \gamma 
 \le c( \|Rf\|^2 - \eps^2  ) + d ( \|S(y - \La f)\|^2 - \eta^2 )
 \qquad \mbox{for all } f \in H.
\ee
Thus, fixing such $c,d \ge 0$, we obtain
$$
 {\sf sv}(z) \le  \inf_\gamma  \gamma \quad \mbox{s.to  the constraint \eqref{CstSProc}}.
$$
Reparametrizing $f \in H$ as $f = f_{c,d} + h$ with variable $h \in H$,
\eqref{CstSProc} becomes 
\begin{align*}
 \|Q(f_{c,d}) -z \|^2 - \gamma & + 2 \langle Q(f_{c,d}) -z, Qh \rangle  + \|Qh\|^2\\
 & \le  c( \|Rf_{c,d}\|^2 - \eps^2  ) + d ( \|S(y - \La f_{c,d})\|^2 - \eta^2 )\\
 & + 2 (c \langle R f_{c,d},  Rh \rangle + d \langle S(\La f_{c,d}-y), S \La h \rangle )\\
 & + c \|Rh \|^2 + d \|S \La h \|^2  
 \qquad \qquad \mbox{for all }  h \in H.
\end{align*}
The linear term in the right-hand side is, 
according to \eqref{CharaReg},
$$
2 \, \big\langle \,  c R^* R f_{c,d} + d \La^* S^* S (\La f_{c,d} -y), \, h \, \big\rangle  = 0.
$$
Therefore, 
for any $c,d \ge 0$,
we arrive at
\begin{align*}
 {\sf sv}(z) \le  & \inf_{\gamma \ge 0} \quad \gamma
 \\ & \mbox{s.to} \quad 
 \|Q(f_{c,d}) -z \|^2 + 2 \langle Q(f_{c,d}) -z, Qh \rangle \\
&  \phantom{\mbox{s.to} \quad }
\le   \gamma - c( \eps^2 -  \|Rf_{c,d}\|^2  ) - d ( \eta^2 - \|S(y - \La f_{c,d})\|^2  )\\
&  \phantom{\mbox{s.to} \quad }
 + c \|Rh \|^2 + d \|S \La h \|^2 - \|Qh\|^2  
 \qquad \quad \mbox{for all }  h \in H.
\end{align*}
Taking the infimum over $c,d \ge 0$,
we recognize the desired inequality ${\sf sv}(z) \le {\sf ub}(z)$.
Note that this inequality turns into an equality in case where the S-procedure is exact---in particular,
if the scalar field is $\bC$. 
The subsequent steps (ii)-(iii) would then establish that $Q(f_{\wt{c},\wt{d}})$ is the Chebyshev center.

\vspace{5mm}
\noindent
\ul{Justification of (ii): Part 1.}
Here, we prove that $\inf_{z \in Z} {\sf ub}(z) \ge {\sf lub}$.
Towards this end, for $z \in Z$, 
we consider the constraint \eqref{CstUB} in the defining expression of ${\sf ub}(z)$,
which we write succinctly as
$$
{\rm LHS}(h) \ge  \|Q(f_{c,d}) -z \|^2 + 2 \langle Q(f_{c,d}) -z, Qh \rangle
 \qquad \mbox{for all }  h \in H.
$$
Since ${\rm LHS}(-h) = {\rm LHS}(h)$,
averaging the above inequality for $h$ and $-h$  leads to
${\rm LHS}(h) \ge  \|Q(f_{c,d}) -z \|^2$
for all $h \in H$, and hence to ${\rm LHS}(h) \ge 0$ for all $h \in H$.
Having loosened the constraint, 
we deduce that
$$
{\sf ub}(z) \ge \inf_{\gamma,c,d \ge 0} \; \gamma
\qquad \mbox{s.to} \quad {\rm LHS}(h) \ge 0 \quad \mbox{for all }  h \in H.
$$ 
Taking the explicit form of ${\rm LHS}(h)$ into account, 
we see that the above constraint decouples as
$$
\gamma - c( \eps^2 -  \|Rf_{c,d}\|^2  ) - d ( \eta^2 - \|S(y - \La f_{c,d})\|^2  ) \ge 0
$$
and
$$
c \|Rh \|^2 + d \|S \La h \|^2 - \|Qh\|^2 \ge 0
\qquad \mbox{for all } h \in H.  
$$ 
The former reads $\gamma \ge {\sf Obj}$ and the latter is the constraint \eqref{CstLUB}.
Thus we arrive at
$$
{\sf ub}(z) \ge \inf_{\gamma,c,d \ge 0} 
\left\{ \gamma \quad  \mbox{s.to}  \quad \gamma \ge {\sf Obj} \; \mbox{ and } \; \eqref{CstLUB} \right\}
= \inf_{c,d \ge 0} \left\{ {\sf Obj} \quad \mbox{s.to} \quad \eqref{CstLUB} \right\}.
$$
This is the desired inequality ${\sf ub}(z) \ge {\sf lub}$, valid for any $z \in Z$.

\vspace{5mm}
\noindent
\ul{Justification of (iii).}
In view of
$$
c \|Rh \|^2 + d \|S \La h \|^2 - \|Qh\|^2 
 = \big\langle \, (c R^* R + d \La^* S^* S \La - Q^* Q)h, \,h \,  \big\rangle,
$$
we instantly see that the constraint \eqref{CstLUB} is equivalent to 
$c R^* R + d \La^* S^* S \La - Q^* Q \succeq 0$.
Therefore, the program defining ${\sf lub}$ is equivalent to
\be
\label{ObjEq1}
\minimize{c,d \ge 0} \; {\sf Obj}
\qquad \mbox{s.to} \quad
c R^* R + d \La^* S^* S \La  \succeq Q^* Q.
\ee
We now transform ${\sf Obj}$ by observing that
\begin{align*}
 c \eps^2 +& d \eta^2 - {\sf Obj}
 = c \|R f_{c,d}\|^2   + d \|S(\La f_{c,d} - y) \|^ 2\\
& = c \langle R^* R f_{c,d}, f_{c,d} \rangle 
+ d \langle S^* S(\La f_{c,d} - y), \La f_{c,d} - y \rangle\\
& = \langle c R^* R f_{c,d}+ d \La^* S^* S (\La f_{c,d} - y), f_{c,d} \rangle
- d \langle S^* S \La f_{c,d}, y \rangle + d \langle S^* S y, y \rangle \\
& =    - d \langle S^* S \La f_{c,d}, y \rangle + d \langle S^* S y, y \rangle,
\end{align*}
where the last step made use of the characterization \eqref{CharaReg}.
It follows that
\begin{align}
\nonumber
{\sf Obj} & = c \eps^2 + d \eta^2 -  d \| S y \|^2 +  d \langle S^* S \La f_{c,d}, y \rangle\\
\label{NewExpObj}
& = \inf_t  \; c \eps^2 + d \eta^2 -  d \| S y \|^2 +  t
\quad \mbox{s.to  } \quad t \ge d \langle S^* S \La f_{c,d}, y \rangle.
\end{align} 
According to \eqref{ExprReg},
the latter inequality can be written as
$$
t \ge d y^* S^* S \La f_{c,d} 
= (d y^* S^* S \La ) [c R^*R + d \La^* S^* S \La]^{-1 } (d\La^* S^* Sy),
$$
or equivalently as the positive semidefiniteness of a Schur complement, 
namely as
\be
\label{SchurCst}
\bbmx
c R^* R + d \La^* S^* S \La & \vline &  d \La^* S^* S y\\
\hline
 d y^* S^* S \La  & \vline & t
\ebmx
\succeq 0.
\ee
Substituting \eqref{NewExpObj} into \eqref{ObjEq1} while imposing the additional constraint \eqref{SchurCst} shows that the program defining ${\sf lub}$ is indeed equivalent to \eqref{SDPCheCenQ}.

\vspace{5mm}
\noindent
\ul{Justification of (ii): Part 2.}
It now remains to prove that ${\sf ub}(Q(f_{\wt{c},\wt{d}})) \le {\sf lub}$,
where we recall that $\wt{c},\wt{d},\wt{t}$ represent minimizers of \eqref{SDPCheCenQ}.
By (iii), this also means that $\wt{c},\wt{d}$ are minimizers of the problem defining ${\sf lub}$.
Thus, the feasibility constraint \eqref{CstLUB} is met for $c=\wt{c}$ and $d= \wt{d}$,
so choosing $\gamma = \wt{c}( \eps^2 -  \|Rf_{\wt{c},\wt{d}}\|^2  ) + \wt{d} ( \eta^2 - \|S(y - \La f_{\wt{c},\wt{d}})\|^2  )$,
we see that the constraint \eqref{CstUB} associated to ${\sf ub}(z)$ is met with $z = Q(f_{\wt{c},\wt{d}})$.
We deduce that ${\sf ub}(Q(f_{\wt{c},\wt{d}})) \le 
\gamma = \wt{c}( \eps^2 -  \|Rf_{\wt{c},\wt{d}}\|^2  ) + \wt{d} ( \eta^2 - \|S(y - \La f_{\wt{c},\wt{d}})\|^2  )$,
which is the minimal value of ${\sf Obj}$
under the constraint \eqref{CstUB}.
In other words, we have shown that ${\sf ub}(Q(f_{\wt{c},\wt{d}})) \le {\sf lub}$, as desired.
\end{proof}

\section{Exactness of the S-Procedure: Proof of Theorem \ref{ThmMain}}

Our goal in this section is to show that the overestimation of Theorem \ref{ThmQ} becomes an exact evaluation if $Q=\Id_H$, $P$ is an orthogonal projection, $S = \Id_{m}$, and $\La \La^* = \Id_m$,
thus proving Theorem \ref{ThmMain}.
We rely on duality in semidefinite programming.
Retaining full generality for the moment,
our primal program is the rewriting of \eqref{SDPCheCenQ} in the form
\begin{align}
\label{Primal}
{\sf lub} \, = & \min_{\substack{c,d \ge 0 \\ t \in \bR}}  \;
c \eps^2 + d ( \eta^2 -  \|Sy\|^2) + t\\
\nonumber
 & \;  \mbox{s.to}  \;   
 M_{c,d,t} :=
\bbmx
c R^* R + d \La^* S^* S \La  -  Q^* Q & \vline & 0 & \vline & 0\\
\hline
0 & \vline & c R^* R + d \La^* S^* S \La & \vline &  d \La^* S^* S y\\
\hline
0 & \vline & d y^* S^* S \La  & \vline & t
\ebmx
\succeq 0.
\end{align}
According to e.g. \cite[Example 5.11]{BoyVan},
its dual program reads
\begin{align}
\label{Dual}
{\sf lub}' & = \max_{X \succeq 0}
\; \tr \left( \bbmx Q^* Q & \vline & 0 & \vline & 0\\
\hline 0 & \vline &  0 & \vline & 0\\
\hline 0 & \vline & 0 & \vline & 0 \ebmx X \right)
\quad \mbox{s.to} \quad 
\tr \left( \bbmx R^* R & \vline & 0 & \vline & 0\\
\hline 0 & \vline & R^* R & \vline & 0 \\
\hline 0 & \vline & 0 & \vline & 0
\ebmx X \right) = \eps^2, \\
\nonumber
 & \qquad  \, \, \,
\tr \left( \bbmx \La^* S^* S \La & \vline & 0 & \vline & 0\\
\hline 0 & \vline & \La^* S^* S \La & \vline &  \La^* S^* S y\\
\hline 0 & \vline &  y^* S^* S \La & \vline & 0 
\ebmx X \right) = \eta^2 - \|S y\|^2,
\;
\tr \left( \bbmx 0 & \vline & 0 & \vline & 0\\
\hline 0 & \vline & 0 & \vline & 0\\
\hline 0 & \vline & 0 & \vline & 1
 \ebmx X \right)  = 1.
\end{align}
It is well known (and easy to verify)
that the inequality ${\sf lub} \ge {\sf lub}'$ always holds.
Furthermore,
if $\wh{c},\wh{d},\wh{t}$ are feasible for \eqref{Primal} and if $\wh{X}$ is feasible for \eqref{Dual},
while $\tr( M_{\wh{c},\wh{d},\wh{t}}  \wh{X}) = 0$,
then the equality ${\sf lub} = {\sf lub}'$ actually holds,
and in fact
$$
{\sf lub} = \wh{c} \eps^2 + \wh{d} ( \eta^2 -  \|Sy\|^2) + \wh{t}
= \tr \left( \bbmx Q^* Q & \vline & 0 & \vline & 0\\
\hline 0 & \vline &  0 & \vline & 0\\
\hline 0 & \vline & 0 & \vline & 0 \ebmx \wh{X} \right)
= {\sf lub}'.
$$
This simply is a consequence of equalities throughout the chain of inequalities
\begin{align*}
0 & \le {\sf lub} - {\sf lub}'
\le \wh{c} \eps^2 + \wh{d} ( \eta^2 -  \|Sy\|^2) + t
- \tr \left( {\small \bbmx Q^* Q & \vline & 0 & \vline & 0\\
\hline 0 & \vline &  0 & \vline & 0\\
\hline 0 & \vline & 0 & \vline & 0 \ebmx} \wh{X} \right)\\
& = \wh{c} \tr \left( {\small \bbmx R^* R & \vline & 0 & \vline & 0\\
\hline 0 & \vline & R^* R & \vline & 0 \\
\hline 0 & \vline & 0 & \vline & 0
\ebmx } \wh{X} \right)
+ \wh{d} \tr \left( {\small \bbmx \La^* S^* S \La & \vline & 0 & \vline & 0\\
\hline 0 & \vline & \La^* S^* S \La & \vline &  \La^* S^* S y\\
\hline 0 & \vline &  y^* S^* S \La & \vline & 0 
\ebmx } \wh{X} \right)
+ \wh{t} \tr \left( {\small \bbmx 0 & \vline & 0 & \vline & 0\\
\hline 0 & \vline & 0 & \vline & 0\\
\hline 0 & \vline & 0 & \vline & 1
 \ebmx } \wh{X} \right)\\
& = \tr( M_{\wh{c},\wh{d},\wh{t}}  \wh{X}) = 0.
\end{align*}
With the aim of choosing such suitable $\wh{c},\wh{d},\wh{t}$ and $\wh{X}$,
it is now time to lose generality and consider our specific situation where
$Q=\Id_H$, $P$ is an orthogonal projection, $S = \Id_{m}$, and $\La \La^* = \Id_m$.
As pointed out in Section \ref{SecContri},
this situation was settled in~\cite{FouLia}.
There, we showed that the set $\cK_{y,\cE} = \{ f \in H: \|Pf\| \le \eps \mbox{ and } \|\La f - y\| \le \eta\}$ admits $f_\sharp  \in H$ as its Chebyshev center
as soon as one can find $c_\sharp, d_\sharp \ge 0$ and $h_\sharp \in H$
 such that
\begin{enumerate}
\item[(a)] $\displaystyle{c_\sharp P + d_\sharp \La^* \La \succeq \Id,}$

\item[(b)] $\displaystyle{c_\sharp P f_\sharp + d_\sharp \La^* (\La f_\sharp -y) 
+ (c_\sharp P + d_\sharp \La^* \La) h_\sharp = h_\sharp,}$

\item[(c)] $\displaystyle{
\langle P f_\sharp, h_\sharp \rangle = 0,
\qquad \langle \La^*  (\La f_\sharp - y), h_\sharp \rangle = 0, 
}$

\item[(d)] $\displaystyle{
\|P f_\sharp + P h_\sharp \|^2 = \eps^2,
\qquad \| \La f_\sharp - y +  \La h_\sharp\|^2 = \eta^2.
}$
\end{enumerate}
These four sufficient conditions were verified
for $f_\sharp = f_{\tau_\sharp}$, 
where $\tau_\sharp \in (0,1)$ was selected as the solution to the implicit equation \eqref{ImpEq}
and where $f_{\tau_\sharp}$ was selected as in \eqref{RegSpe2} with $\tau = \tau_\sharp$. 
We also made the choices 
$c_\sharp = (1-\tau_\sharp)/\la_{\min}$ and
$d_\sharp = \tau_\sharp/\la_{\min}$,
where $\la_{\min}$ was the smallest eigenvalue of $(1-\tau_\sharp) P + \tau_\sharp \La^* \La$.
Finally, we took $h_\sharp$ as an associated eigenvector, so that $(c_\sharp P + d_\sharp \La^* \La)h_\sharp = h_\sharp$.
It was normalized to satisfy (d)---note that
it is the specific choice of $\tau_\sharp$ that made it possible to satisfy both equalities in (d).
Keeping these recollections in mind, 
we now set
$$
\wh{c} := c_\sharp,
\quad
\wh{d} := d_\sharp,
\quad
\wh{t} = d_\sharp \langle \La f_\sharp, y \rangle,
\qquad \mbox{and} \quad
\wh{X} = \bbmx
h_\sharp h_\sharp^* & \vline & 0 & \vline & 0\\
\hline 0 & \vline & f_\sharp f_\sharp^* & \vline & -f_\sharp\\
\hline 0 & \vline & -f_\sharp^* & \vline & 1 
\ebmx \succeq 0.
$$ 
The feasibility of $\wh{c},\wh{d},\wh{t}$ for \eqref{Primal}
follows from  (a),
combined with the inequality $\wh{t} \ge d_\sharp \langle \La f_\sharp, y \rangle$
reformulated via the Schur complement as in the justification of~(iii)---note that $f_\sharp = f_{c_\sharp,d_\sharp}$.
The feasibility of $\wh{X}$ for \eqref{Dual} is a consequence of (c) and (d):
while the third part of the constraint is obvious,
the first two parts require some work.
For the first part, we observe that
\begin{align*}
\tr & \left( \bbmx P & \vline & 0 & \vline & 0\\
\hline 0 & \vline & P & \vline & 0 \\
\hline 0 & \vline & 0 & \vline & 0
\ebmx \wh{X} \right) 
 = \tr (P h_\sharp h_\sharp^*) + \tr(P f_\sharp f_\sharp^*)
 = \|P h_\sharp \|^2 +  \|P f_\sharp \|^2\\
& \qquad  \underset{(c)}{=} \|P h_\sharp  + P f_\sharp \|^2
\underset{(d)}{=} \eps^2.
\end{align*} 
For the second part, we observe that 
\begin{align*}
\tr & \left( \bbmx \La^* \La & \vline & 0 & \vline & 0\\
\hline 0 & \vline & \La^* \La & \vline &  \La^* y\\
\hline 0 & \vline &   y^*  \La & \vline & 0 
\ebmx \wh{X} \right) 
 = \tr(\La^* \La h_\sharp h_\sharp^*) + \tr(\La^* \La f_\sharp f_\sharp^*)  - 2 \langle \La^* y, f_\sharp \rangle \\
& \qquad \qquad = \|\La h_\sharp\|^2 + \|\La f_\sharp\|^2 - 2 \langle y, \La f_\sharp \rangle
=   \|\La h_\sharp\|^2 + \|\La f_\sharp - y\|^2 - \|y\|^2 \\
& \qquad \qquad \underset{(c)}{=} \|\La f_\sharp - y + \La h_\sharp\|^2 - \|y\|^2
\underset{(d)}{= } \eta^2 - \|y\|^2.
\end{align*}
Having verified the feasibility conditions,
we turn to the complementary slackness condition.
Using the eigenvalue property of $h_\sharp$
and the characterization of $f_\sharp$
(which together accounted for (b)),
we obtain
\begin{align*}
\tr (M_{\wh{c},\wh{d},\wh{t}} \wh{X})
& = \tr( (\wh{c} P + \wh{d} \La^* \La - \Id)h_\sharp h_\sharp^* )
+ \tr( (\wh{c} P + \wh{d} \La^* \La )f_\sharp f_\sharp^* )
- 2 \wh{d} \langle \La^* y,f_\sharp \rangle  + \wh{t}\\
& = \langle (c_\sharp P + d_{\sharp} \La^* \La - \Id)h_\sharp, h_\sharp \rangle 
+ \langle ( c_\sharp P + d_\sharp \La^*( \La f_\sharp - y), f_\sharp \rangle\\
& = 0,
\end{align*}
as expected.
At this point, we are guaranteed that
\begin{align*}
{\sf lub} 
& = {\sf lub}' 
= \tr \left( \bbmx \Id_H & \vline & 0 & \vline & 0\\
\hline 0 & \vline &  0 & \vline & 0\\
\hline 0 & \vline & 0 & \vline & 0 \ebmx \wh{X} \right)
= \tr(h_\sharp  h_\sharp^*)
= \|h_\sharp\|^2 = \|f_\sharp + h_\sharp - f_\sharp \|^2 \\
& \underset{(d)}{\le} \sup_{f \in H} \; \|f - f_\sharp\|^2
\quad \mbox{s.to} \quad
\|Pf\|^2 \le \eps^2 \mbox{ and } \|\La f - y\|^2 \le \eta^2\\
& = {\sf sv}(f_\sharp) = \inf_{g \in H} {\sf sv}(g),
\end{align*}
where the last equality expresses the result of \cite{FouLia} that $f_\sharp$ is the Chebyshev.
Now, with $f_{\wt{c},\wt{d}}$ denoting the candidate Chebyshev center from Theorem \ref{ThmQ},
we recall that ${\sf sv}(f_{\wt{c},\wt{d}}) 
\le {\sf ub}(f_{\wt{c},\wt{d}}) = {\sf lub}$.
Thus, we finally derive that 
$$
{\sf sv}(f_{\wt{c},\wt{d}})  \le  \inf_{g \in H} {\sf sv}(g),
$$
i.e., that $f_{\wt{c},\wt{d}}$ is a Chebyshev center of $\{ f \in H: \|P f \| \le \eps \mbox{ and } \|y-\La f\| \le~\eta~\}$---actually, by uniqueness, it is the Chebyshev center.
This concludes the full proof of Theorem \ref{ThmMain}.

\section{Some Open Questions}

This work, as well as the previous work \cite{FouLia},
considered the model set $\cK$ and the uncertainty set $\cE$ given by
\be
\label{Mod+Unc}
\cK  = \{ f \in H: \|Pf\|_H \le \eps\}
\qquad \mbox{and} \qquad
\cE = \{ e \in \bR^m: \|e\|_2 \le \eps \}.
\ee
For the quantity of interest $Q = \Id_H$,
they settled the Chebyshev center issue under the specific conditions that $P$ is an orthogonal projector and that $\La \La^* = \Id_m$.
Without these conditions, however,
there are several issues that remain unsettled.
Here is a list of questions,
all concerning the model set and the uncertainty set from \eqref{Mod+Unc},
to which we do not know the answers---we cannot even foretell if any of them are likely to be  easily solved...

\begin{enumerate}
\item {\em Does the Chebyshev center always belong to $\cV + {\rm ran}(\La^*)$?}

Under the the specific conditions that $P$ is an orthogonal projector and that $\La \La^* = \Id_m$,
we showed in \cite{FouLia} that the solution $f_\tau$ to \eqref{RegSpe2} depends affinely on $\tau \in [0,1]$.
Since $f_0 \in \cV + {\rm ran}(\La^*)$ and $f_1 \in \cV$,
any $f_\tau$ belongs to $\cV + {\rm ran}(\La^*)$ and in particular the Chebyshev center does. 
Can this be proved directly without the specific conditions, even if the Hilbert space $H$ is not finite-dimensional?
An affirmative answer would allow one to deal with the infinite-dimensional setting by restricting the problem to the finite-dimensional space $\cV + {\rm ran}(\La^*)$.\vspace{2mm}

\item {\em Is the Chebyshev center always obtained via regularization?}

From \cite{FouLia},
we know that the answer is affirmative under the specific conditions mentioned above,
since the Chebyshev center equals the regularizer $f_{\tau_\sharp}$ with parameter $\tau_\sharp \in (0,1)$ satisfying the implicit equation \eqref{ImpEq}.
Without the specific conditions,
we also know from \cite{BecEld}
that the answer is affirmative if one considered the complex setting rather than the real setting.\vspace{2mm}

\item {\em What is the Chebyshev center relative to a quantity of interest $Q \not= \Id_H$?}

Here, we do not have any partial answers in the real setting,
even under the specific conditions above.
Indeed, the result of \cite{FouLia} only considered the case $Q = \Id_H$
and initial attempts to extend it to the case $Q \not= \Id_H$ were not conclusive.
An obvious guess (valid in the complex setting) is that the Chebyshev center equals $Q(f_{\wt{c},\wt{d}})$,
where $f_{\wt{c},\wt{d}}$ is the regularizer obtained from the relaxation of Theorem \ref{ThmQ}.\vspace{2mm}

\item {\em Does relaxation always lead to near-optimality?}

Should one fail to find the Chebyshev center,
one may be content with the statement that there exists a parameter $\tau_\natural \in (0,1)$ such that the regularizer $f_{\tau_\natural}$ admits a near-minimal local worst-case error, in the sense that
$$
\sup_{\substack{f \in \cK\\ y - \La f \in \cE}} \|Q(f) - Q(f_{\tau_\natural}) \|
\le C \times
\inf_{z \in Z}  \sup_{\substack{f \in \cK\\ y - \La f \in \cE}} \|Q(f) - z \|
$$
for some constant $C \ge 1$.
This was proved in \cite{FLV}
under no specific conditions.
We~wonder if the same holds for $f_{\wt{c},\wt{d}}$ instead of $f_{\tau_{\natural}}$.

\end{enumerate}

\section{Some Other Optimal Recovery Results Obtained at CAMDA}
\label{SecRes}

As alluded to in the introduction,
investigations in the field of optimal recovery constituted one of the pilot research projects during the academic year 2022-23, i.e., the initial year of the Center for Approximation and Mathematical Data Analytics, better known as CAMDA.
Below is a synopsis of results generated during this time.

\begin{itemize}

\item The article \cite{FLV} considered a scenario of optimal recovery from inaccurate data very close to the present one,
the only difference being that it targeted  a global worst-case error akin to \eqref{gwce} rather than a local worst-case error akin to \eqref{lwce}.
In spite of a focus on graph signals
(i.e., functions defined on the set of vertices of a graph),
it settled the global optimality question in the general setting of the model and uncertainty sets from \eqref{Mod+Unc}
without any specific conditions on $P$ and $\La$ 
and for an arbitrary linear map $Q: H \to Z$ as quantity of interest.
It established that the a globally optimal recovery map is given by the linear map $Q \circ \Delta_{\tau_\sharp}: \bR^m \to Z$, where
$$
\Delta_{\tau_\sharp}
: y \in \bR^m
\mapsto 
\Big[ \underset{f \in H}{\argmin} \; 
(1-\tau_{\sharp}) \|Pf\|^2 + \tau_\sharp \|y - \La f \|^2 \Big] \in H
$$
and where the parameter $\tau_\sharp \in (0,1)$ is the ratio $\tau_\sharp = d_\sharp/(c_\sharp + d_\sharp)$ featuring the solutions $c_\sharp,d_\sharp \ge 0$ to the semidefinite program
$$
\minimize{c,d \ge 0} \; c \eps^2 + d \eta^2
\qquad \mbox{s.to }
c P^* P + d \La^* \La \succeq Q^* Q.
$$
Precisely, this means that
$$
\sup_{\substack{f \in \cK \\ e \in \cE}}
\|Q(f) - (Q \circ \Delta_{\tau_\sharp})(\La f +e) \|
= \inf_{\Delta: \bR^m \to Z}
\sup_{\substack{f \in \cK \\ e \in \cE}}
\|Q(f) - \Delta(\La f +e) \|. 
$$\vspace{2mm}

\item The article \cite{FouPao} viewed the observation errors differently,
since $e \in \bR^m$ was considered to be a random vector there.
It advocated for the global error
$$
{\rm ge}^{\rm or}_p(\Delta)
:= \bigg( \bE \Big[ \sup_{f \in \cK} \|Q(f) - \Delta(\La f + e) \|_Z^p \Big] \bigg)^{1/p},
\qquad p \ge 1,
$$
to be used in optimal recovery, as opposed to another notion appearing more often in statistical estimation, namely
$$
{\rm ge}^{\rm se}_p(\Delta)
:= \bigg( \sup_{f \in \cK} \bE \Big[  \|Q(f) - \Delta(\La f + e) \|_Z^p \Big] \bigg)^{1/p},
\qquad p \ge 1.
$$
In an arbitrary Banach space $F$,
if $Q \in F^*$ is a linear functional,
if the model set $\cK$ is symmetric and convex,
and if the random vector $e \in \bR^m$ is merely log-concave,
it was proved that linear maps are near optimal relative to ${\rm ge}^{\rm or}$, 
in the sense that 
\be
\inf_{\Delta: \bR^m \to \bR \,{\rm linear}}
{\rm ge}^{\rm or}_p(\Delta^{\rm lin})
\le C_p \times \inf_{\Delta: \bR^m \to \bR} {\rm ge}^{\rm or}_p(\Delta)
\ee
for some constant $C_p$ depending on $p$.
This extends the seminal result of \cite{Don},
which dealt with ${\rm ge}^{\rm se}$ instead of ${\rm ge}^{\rm or}$ and considered only gaussian vectors $e \in \bR^m$.\vspace{2mm}

\item The article \cite{Ful} did not take place in a Hilbert space $H$ but in the Banach space of continuous functions on a compact set $\cX$,
i.e., $F = C(\cX)$.
It targeted,
again from a global optimality perspective,
the full recovery problem ($Q=\Id_{C(\cX)}$)
relative to a model set based on approximation capabilities relative to some linear subspace $\cV \inc C(\cX)$ and to a parameter $\eps > 0$, namely 
$$
\cK_\cV = \{ f \in C(\cX):
 {\rm dist}_{C(\cX)} (f,\cV) \le \eps \}.
$$
The observational data were assumed to be acquired as accurate evaluations at points $x^{(1)},\ldots,x^{(m)} \in \cX$.
In this situation,
it was known (see \cite{DFPW})
how to optimally estimate point evaluations $\delta_x \in C(\cX)^*$ at each $x \in \cX$,
and how to subsequently produce an optimal recovery map for $Q=\Id_{C(\cX)}$---a linear one, to boot.
However, the construction was not practical.
A practical construction,
which amounts to solving about $m$ standard-form linear programs via the simplex method,
was conceived in \cite{Ful}
under the proviso that $\cV$ is a Chebyshev space.
Although this is a strong proviso---by Mairhuber--Curtis theorem, it essentially excludes multivariate functions---it brings hope that genuinely optimal recovery maps in $C(\cX)$ can be computationally constructed in specific instances.\vspace{2mm}

\item 
The article \cite{BBCDDP} studied the numerical solutions to partial differential equations.
With $\Omega$ being a bounded Lipschitz domain in $\bR^d$ with $d=2$ or $d=3$, 
it considered the elliptic problem
\begin{equation*}
    -\Delta u = f \quad \mbox{in } \Omega, 
    \qquad \qquad u=g \quad \mbox{on } \partial\Omega, 
\end{equation*}
where $f \in H^{-1}(\Omega)$ is known
but $g \in H^{1/2}(\partial\Omega)$
is not known
and when linear observations $y = \La u$ made on the solution $u \in H^1(\Omega)$ are available.
Finding an approximant $\wh{u}$ is viewed as an optimal recovery problem relative to the model set 
$$
\cK_s = \{ 
u \in H^1(\Omega): -\Delta u = f
\mbox{ and } 
u_{| \partial \Omega} = g 
\mbox{ for some } g \in B_{H^s(\partial \Omega)}
\},
\quad
s > 1/2.
$$
Based on minimum-norm interpolation (aka spline algorithm),
an implementable procedure was conceived to generate an output to $\wh{u}$ which is locally near-optimal,
in the sense that 
$\sup\{ \|u - \wh{u} \|_{H^1(\Omega)} \colon
u \in \cK_s, \La u = y\}$ is a most a multiplicative constant away from the Chebyshev radius of the set $\{ u \in \cK_s \colon \La u = y \}$. 
\end{itemize}

\begin{acknowledgement}
S. F. is partially supported by grants from the NSF (DMS-2053172) and from the ONR (N00014-20-1-2787).
\end{acknowledgement}


\end{document}